\newcommand{\vol}{\textnormal{vol}}
\def\f12{\frac 1 2}
\def\a{\alpha}
\def\b{\beta}
\def\ga{\gamma}
\def\ep{\epsilon}
\def\Si{\Sigma}
\def\om{\omega}
\def\Om{\Omega}
\def\pa{\partial}
\def\les{\lesssim}
\def\f12{\frac 1 2}
\newcommand{\lap}{\mbox{$\Delta \mkern-13mu /$\,}}
\newcommand{\nabb}{\mbox{$\nabla \mkern-13mu /$\,}}
\newtheorem{thm}{Theorem}
\newtheorem{prop}{Proposition}
\newtheorem{lem}{Lemma}
\newtheorem{remark}{Remark}
\begin{document}

\title{Global solutions of nonlinear wave equations with large energy}
\author{Shiwu Yang}
\date{}
\maketitle
\begin{abstract}
In this paper, we give a criterion on the Cauchy data for the semilinear wave equations satisfying the null condition in $\mathbb{R}^+\times\mathbb{R}^{3}$ such that the energy of the data can be arbitrarily large while the solution is still globally in time in the future.
\end{abstract}

\section{Introduction}
In this paper, we study the Cauchy problem to the semilinear wave equations
\begin{equation}
\label{largeeq}
\begin{cases}
 \Box\phi=(-\pa_t^2+\Delta)\phi=F(\phi, \pa\phi),\\
 \phi(0,x)=\phi_0(x), \pa_t\phi(0,x)=\phi_1(x)
\end{cases}
\end{equation}
in $\mathbb{R}^{+}\times \mathbb{R}^3$. The nonlinearity $F$ is assumed to
satisfy the null condition outside a large cylinder $\{(t, x)||x|\leq R\}$, that is,
\begin{equation}
 \label{nullcond}
F(\phi,\pa\phi)=A^{\a\b}\pa_\a\phi\pa_\b\phi+O(|\phi|^3+|\pa\phi|^3+|\phi|^{N}+|\pa\phi|^N), \quad |x|\geq R,
\end{equation}
where $A^{\a\b}$ are constants such that $A^{\a\b}\xi_\a\xi_\b=0$ whenever $\xi_0^2=\xi_1^2+\xi_2^2+\xi_3^2$ and $N$ is a given integer larger than 3. Inside the cylinder we assume $F$ is at least
quadratic in terms of $\phi$, $\pa\phi$. The data $(\phi_0, \phi_1)$ are assumed to be smooth but can be arbitrarily large in the energy space.

\bigskip

The long time behavior of solutions of nonlinear wave equations has drawn considerable attention in the past decades. When the data are small, the
classical result of Christodoulou \cite{ChDNull} and Klainerman \cite{klNull} shows that the solution of the nonlinear wave equation \eqref{largeeq}
is globally in time for all sufficiently small initial data. Generalizations and variants can be found in
\cite{soggestar},
\cite{klmulti},
\cite{smallglobLuk},
\cite{sogge-metcalfe-nakamura},
\cite{sogge-metcalfe},
\cite{sideris-multispeed},
\cite{soggemulti},
\cite{glsoChengbo}
and references therein. One of the most remarkable related results should be the global nonlinear stability
of Minkowski space first proved by Christodoulou-Klainerman \cite{kcg} and later an alternative proof contributed by Lindblad-Rodnianski \cite{SMigor}.

\bigskip

For the general large data case which is concerned in the current study, global existence results for the
 related wave map problems have been established with initial energy below that of any nontrivial harmonic maps,
see e.g.
\cite{wpKrieger},
\cite{wpTataru2},
\cite{wpTataru1},
\cite{wptao}.
Failure of this energy constriction may lead to finite time blow up, see e.g. \cite{wpbligor}, \cite{wpbligor2}, \cite{wpblshatah}, \cite{wpblstruwe}, \cite{wpbltataru}. For equation \eqref{largeeq}, the concrete example
\[
 \Box\phi=|\pa_t\phi|^2-|\nabla\phi|^2
\]
shows that the solution can blow up in finite time for general large data. For the details we refer to \cite{klgex}.

\bigskip

In a recent work \cite{largePin} of Wang-Yu, they constructed an open set of Cauchy data for the semilinear wave equations
satisfying the null condition such that the energy is arbitrarily large while the solution exists globally
in the future. The construction is indirect. They in fact impose the radiation data at the past null infinity and then solve the equation
to some time $t_0<0$ to obtain the Cauchy data. Their work relied on the \textsl{short pulse} method of Christodoulou in his monumental
work \cite{trapChristo} on the formation of trapped surface. Extensions and refinements of Christodoulou's result are contributed by, e.g.,
Klainerman-Rodnianski \cite{trapIgor}, Luk-Rodnianski \cite{trlukigor1}, \cite{trlukigor2}, Klainerman-Luk-Rodnianski \cite{trlukigor3},
Yu \cite{pindy}, \cite{pinenergy}, \cite{pinwp}.

\bigskip

However, the nonlinear terms considered in Wang-Yu's work are quite restrictive. In fact, only quadratic null forms are allowed and cubic or higher order nonlinearities are excluded for consideration due to the short pulse method. In this paper, we use the new approach developed in \cite{newapp}, \cite{yang1}, \cite{yang2}, \cite{yang3} to treat the nonlinear wave equations \eqref{largeeq} with large data. We are able to give a
criterion on the initial data such that the solution exists globally in the future while the energy can be arbitrarily large. In particular, our approach applies to equations with any higher order nonlinearities. Combined with the techniques developed in \cite{yang3}, our result here can even be extended to quasilinear wave equations.
Moreover, we no longer require the data to have compact support as in \cite{yang1}, \cite{yang2}, \cite{yang3}. This in particular implies that those results also hold for data satisfying conditions in this paper.

\bigskip

Before we state the main result, we define the necessary notations. We use the coordinate system $(t, x)=(x^0, x^1, x^2, x^3)$ of the Minkowski space.
We denote $\pa_0=\pa_t$, $\pa_i=\pa_{x^i}$, $\pa=(\pa_t, \pa_1, \pa_2, \pa_3)=(\pa_t, \nabla)$.
We may also use the standard polar coordinates $(t, r, \om)$. Let $\nabb$ denote the induced covariant derivative and $\lap$ the induced Laplacian on the spheres of constant $r$. We also define the null coordinates $u=\frac{t-r}{2}$, $v=\frac{t+r}{2}$ and denote the corresponding partial derivatives
\[
 \pa_u=\pa_t-\pa_r,\quad \pa_v=\pa_t+\pa_r, \quad \overline{\pa_v}=(\pa_v, \nabb)
\]
for $r>0$. The vector fields that will be used as commutators are
\[
 Z=\{\pa_t, \Om_{ij}=x_i\pa_j-x_j\pa_i\}.
\]
Let $\a$ be a positive constant. Without loss of generality, we may assume $\a<\frac{1}{4}$. Denote
\begin{align*}
 E_0(R)&= \sum\limits_{k\leq 4}\left.\int_{\{r\geq R\}\cap \mathbb{R}^3}r^{1+\a}|\overline{\pa_v}(rZ^k\phi)|^2drd\om\right|_{t=0}+
\left.\int_{\{r\leq R\}\cap \mathbb{R}^3}|\pa Z^k\phi|^2 dx\right|_{t=0},\\
 E_1(R)&=\sum\limits_{k\leq 4}\left.\int_{\{r\geq R\}\cap \mathbb{R}^3}|\pa_u(rZ^k\phi)|^2+|rZ^k\phi|^2drd\om\right|_{t=0}.
\end{align*}
These quantities can be uniquely determined by the initial data $(\phi_0,\phi_1)$ together with the equation \eqref{largeeq}. We have the
following main result:
\begin{thm}
 \label{mainthmlarge}
Consider the Cauchy problem for the semilinear wave equation \eqref{largeeq} satisfying the null condition \eqref{nullcond} with
some integer $N\geq 3$. For all $\a\in(0, 1)$, there exists a constant $R(\a)$, depending only on $\a$,
and a constant $\ep_0$, depending only on the highest order $N$ of the nonlinearity, such that if the initial data satisfy the estimate
\begin{equation}
\label{assumpla}
E_0(R)\leq R^{-2+\a},\quad E_1(R)\leq R^{\ep_0\a}
\end{equation}
for some $R\geq R(\a)$, then the solution $\phi$ exists globally in the future and obeys the estimates:
\begin{align*}
& |\overline{\pa_v}\phi|\leq C_\delta (1+r)^{-\frac{3}{2}+\delta},\quad \delta>0;\\
& |\pa_u\phi|\leq C_{\delta}(1+r)^{-1+\delta}(1+t-r+R)^{-\f12-\f12\a},\quad \delta>0, \quad t+R\geq r;\\
& |\pa_u\phi|\leq C(1+r)^{-1}R^{\f12\ep_0\a},\quad t+R<r,
\end{align*}
where the constant $C_\delta$ depends on $\delta$, $\a$ and the constant $C$ depends only on $\a$.
\end{thm}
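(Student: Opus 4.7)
My plan is to run a continuity/bootstrap argument on the maximal interval of existence and close a hierarchy of weighted energies, following the framework of \cite{newapp,yang1,yang2,yang3}. The spacetime naturally splits, by finite speed of propagation, into three regions with distinct mechanisms: (i) the interior cylinder $\{r\leq R\}$, where the nonlinearity is only assumed quadratic but the hypothesis $E_0(R)\leq R^{-2+\a}$ forces the initial data together with its first four $Z$-derivatives to be small in $L^2$, so classical small-data arguments apply; (ii) the exterior conic region $\{r\geq R,\; t+R\geq r\}$, where the null condition \eqref{nullcond} is operative and Dafermos--Rodnianski type $r^p$-weighted estimates can be propagated from the initial $r^{1+\a}$-weighted quantity that sits inside $E_0(R)$; (iii) the far region $\{t+R<r\}$, which is determined entirely by the initial data on $\{r\geq R\}$ and is controlled by $E_1(R)$ alone via the standard energy identity on outgoing null cones that never cross $\{r\leq R\}$.

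\textbf{Weighted energies and commutators.} In the exterior I would commute the equation with vector fields from $Z=\{\pa_t,\Om_{ij}\}$ up to order four, multiply the commuted equation by $r^p\pa_v(rZ^k\phi)$ for $p\in\{1+\a,\a,-1+\a\}$, and integrate over a region bounded by two outgoing null cones and the initial slice. Integration by parts produces, in addition to the conserved flux along each cone, a favorably signed bulk term of the form $r^{p-1}|\overline{\pa_v}(rZ^k\phi)|^2$; this is the standard $r^p$ mechanism and yields integrated $u$-decay at the cost of lowering $p$. In parallel I propagate an energy controlling $\pa_u(rZ^k\phi)$ and $rZ^k\phi$ on truncated cones, initially bounded by $E_1(R)$. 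Combining these weighted energies with the four $Z$-derivatives and a Klainerman--Sobolev estimate on the spheres of constant $u,v$ produces the claimed pointwise bounds, with the small loss $\de>0$ absorbed by interpolating between adjacent weights in the hierarchy. The third pointwise bound, in the region $t+R<r$, comes directly from Klainerman--Sobolev applied to the $E_1$-type flux across the ingoing cones, explaining the explicit $R^{\f12\ep_0\a}$ dependence.

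\textbf{Handling the nonlinearity.} With the bootstrap pointwise bounds at hand, the key structural gain comes from the null condition outside the cylinder: the quadratic form $A^{\a\b}\pa_\a\phi\,\pa_\b\phi$ always pairs a ``good'' derivative $\overline{\pa_v}\phi$, of size $(1+r)^{-3/2+\de}$, with an arbitrary derivative, so its contribution to each weighted energy integral is integrable in $r$ with room to spare. Cubic and higher-order terms, up to order $N$, enjoy even stronger pointwise decay and are controlled by routine Sobolev interpolation; the resulting dependence on $N$ is absorbed into the smallness constraint on $\ep_0$. Inside $\{r\leq R\}$ the nonlinearity need not be null, but a standard energy inequality applied on time-slabs of width of order $R$, together with the initial smallness $E_0(R)\les R^{-2+\a}$ and the smallness of the flux streaming in from the exterior (again of size $R^{-2+\a}$ by finite speed of propagation), keeps the interior high-order norms of size $R^{-1+\a/2}$ so that all nonlinear terms there remain perturbative.

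\textbf{Main obstacle and closing.} The most delicate step will be propagating the $E_1$-type energy, which is allowed to be as large as $R^{\ep_0\a}$ and thus cannot be treated as a small perturbation. The gain must come from the fact that every dangerous nonlinear contribution to this energy pairs an $E_1$-type ``bad'' derivative with at least one $E_0$-type ``good'' derivative that carries, via the pointwise bound, an $r^{-1/2-\a/2}$ surplus; integrated against the weighted multiplier this produces a factor like $R^{-(1-\a)/2}$, which beats the growth $R^{\ep_0\a}$ provided $\ep_0$ is chosen sufficiently small in terms of $1-\a$ and $N$. Taking $R\geq R(\a)$ large enough to absorb the implicit universal constants then closes the bootstrap, proves global existence in the future, and yields the three stated pointwise decay rates as a direct translation of the propagated $r^p$ hierarchy in each of the three regions.
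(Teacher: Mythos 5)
Your high-level architecture is right: split spacetime around the cone $r=t+R$, propagate $r^p$-weighted energies with commutators from $Z=\{\pa_t,\Om_{ij}\}$, exploit the null condition so that every dangerous quadratic term pairs a $\pa_u$-derivative with a $\overline{\pa_v}$-derivative, and extract smallness from the power $R^{-2+\a}$ on $E_0$ while tolerating $E_1\sim R^{\ep_0\a}$. Your diagnosis of the ``main obstacle'' --- that $E_1$ is not a small perturbation and must be beaten by the surplus decay of the good derivative --- is exactly the right issue. But there are several points at which your proposed route either diverges from the paper or has a genuine gap.

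First, the paper's analysis is \emph{strictly ordered}, not a parallel treatment of regions. Section 3 first solves the equation on $\{r\geq t+R\}$ by a bootstrap and, as its output, gives the bounds \eqref{data0}--\eqref{data1} for the flux through the outgoing cone $S_0=\{u=-R/2\}$. Those bounds then serve as the initial/boundary data for the interior argument in Section 4. Your three-region organization obscures this causal ordering; without it, the interior argument has nothing to start from.

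Second, and more importantly, the quantity on which the paper bootstraps is a weighted spacetime $L^2$ norm of the \emph{source} $F$ --- namely $\iint_{\mathcal D_R}|Z^kF|^2 r^{2+\a}\,dxdt$ in the exterior and $D^\ep[Z^kF]$, $D^\a_+[Z^kF]$ in the interior (assumptions \eqref{bootout}, \eqref{bsFlarge}). These feed directly into the $r^p$-weighted identity and the Morawetz/integrated local energy estimate. You instead propose a bootstrap on the pointwise bounds stated in the theorem, obtained from a ``Klainerman--Sobolev'' estimate. This is a different route and it has a concrete obstruction: the commutator algebra here is only $\{\pa_t,\Om_{ij}\}$ --- there are no boosts and no scaling --- so the full Klainerman--Sobolev inequality is not available, and even if it were, the energy associated with boosts/scaling could not be kept small in the presence of the large $E_1$ data. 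What the paper actually uses is the spherical Sobolev embedding combined with (i) the propagated $r^p$-flux and (ii) a pigeonhole/dyadic-sequence argument in $\tau$ that converts the bounded spacetime integral $\int\int_{S_\tau}r^\a|\pa_v\psi|^2$ into the decay $E[Z^k\phi](\tau)\les R^{-\ga}\tau_+^{-1-\a}$. That pigeonhole step is the mechanism producing the $(1+t-r+R)^{-1/2-\a/2}$ factor in the second pointwise bound, and it is entirely absent from your write-up.

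Third, inside the cylinder $\{r\leq R\}$ the paper does not invoke a separate ``classical small-data argument''. The nonlinearity there need not be null, and what controls it is the integrated local energy estimate (Morawetz) together with an elliptic estimate near $r\leq 1$ and the pointwise bound of the claim \eqref{claim}, all of whose smallness comes from the decayed energy flux $A(\tau)$ transmitted from the exterior --- not merely from finite speed of propagation. Saying the interior is handled by small-data arguments risks circularity, since the exterior energy is large in $E_1$.

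Finally, the bookkeeping of the gain in $R$ is more delicate than a single $R^{-(1-\a)/2}$ factor beating $R^{\ep_0\a}$: the paper fixes $\b=1-2\a$, $\ep=\a/20$, $\ep_1=\a/(2N)$ so that the nonlinear contribution closes with $R^{-\b-\a/5}$ in the exterior. Your estimate is a correct heuristic but you would need to run the weighted Sobolev estimate \eqref{nullformest} term by term to verify that every combination of derivatives is actually integrable with a uniform negative power of $R$; the most dangerous pairing is the one involving two $\pa_u\psi$ factors (which never occurs, by the null condition) and the next-most-dangerous is $|\nabb Z^4\psi||\pa_t\psi|$, which requires the sharp $r^{1+\a}$-weighted flux on incoming cones and is treated separately in the paper.
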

\begin{remark}
Similar result holds for equations in higher dimensions without assuming the null condition.
\end{remark}
The Theorem implies that the energy of the initial data can be as large as $R^{\ep_0\a}$. Since $R$ can be any constant larger than a fixed constant $R(\a)$, the energy can be arbitrarily large. Moreover, the amplitude of the solution, at least in a small region, can have size $R^{\f12 \ep_0\a}$. In Wang-Yu's work, the construction of the Cauchy data is indirect and only the size of the energy has a lower bound. The amplitude or the $L^\infty$ estimates of the solution is unclear except the upper bound. From this point of view, the problem we consider here is a large data problem.

\bigskip

The existence of the initial data $(\phi_0, \phi_1)$ satisfying the conditions in the Theorem can be seen as follows: for any fixed $\a\in(0, 1)$ and any $R\geq R(\a)$, let $\phi_0$ be small in the ball with radius $R$ in $\mathbb{R}^3$. Here $R(\a)$ is a sufficiently large constant depending only on $\a$. Outside the ball, the energy of $\phi_0$ are allowed to be as large as $R^{\ep_0\a}$. Then for $\phi_1$, we require it to be small inside the ball with radius $R$. Outside the ball, it is close to $\pa_r\phi_0$. This will definitely give a large set of initial data $(\phi_0, \phi_1)$ satisfying the conditions in the Theorem.

\bigskip

We will use the new approach developed in \cite{newapp}, \cite{yang1}, \cite{yang2}, \cite{yang3} to prove the main Theorem. A key ingredient of this new approach is the $p$-weighted energy inequality originally introduced by Dafermos-Rodnianski \cite{newapp}. This inequality can be obtained by using the vector field $r^p\pa_v$ as multipliers in a neighborhood of the null infinity. It in particular implies that the $p$-weighted energy $E_0(R)$, see the definition before the main Theorem, keeps small if initially it is. This allows us to relax the size of the transversal derivative the solution, which is $E_1(R)$ in the theorem.

\bigskip

We will first construct the solution of the nonlinear wave equation outside the light cone, that is the region $r\geq R+t$, and show that the energy flux through the outgoing null hypersurface $r=t+R$ is small. And then we prove the solutions exists globally inside the light cone, for which we are not able to apply the results, e.g., in \cite{yang3} directly. In the previous results, the smallness needed in order to close the bootstrap argument for nonlinear problem is guaranteed by assuming the data to be sufficiently small. Hence
it is not necessary to keep track of the dependence of the radius $R$ of the constants in the argument. However, in this paper, the smallness comes from the radius $R$ and thus we need an argument with all the dependence on $R$.

\textbf{Acknowledgments}
The author would like to thank Igor Rodnianski for his consistent encouragement on this problem. He also thanks Pin Yu for helpful discussions.

\section{Preliminaries and energy identities }
We briefly recall the energy identity for wave equations, for details we refer to \cite{yang3}. Let $m$ be the Minkowski metric. We make a convention that the
Greek indices run from $0$ to $3$ while the Latin
indices run from $1$ to $3$. We raise and lower indices of any tensor relative to the metric $m$, e.g., $\pa^\ga=m^{\ga\mu}\pa_\mu$. Recall the energy-momentum tensor
\[
{\mathbb T}_{\mu\nu}[\phi]=\pa_\mu\phi\pa_\nu\phi-\frac12 m_{\mu\nu}\pa^{\gamma}\phi\pa_{\gamma}\phi.
\]
Given a vector field $X$, we define the currents
\[
J^X_\mu[\phi]= {\mathbb T}_{\mu\nu}[\phi]X^\nu, \qquad
K^X[\phi]= {\mathbb T}^{\mu\nu}[\phi]\pi^X_{\mu\nu},
\]
where $\pi^X_{\mu\nu}=\frac12 \mathcal{L}_Xg_{\mu\nu}$ is the deformation tensor of the vector field $X$. For any function $\chi$, we define the vector field $\tilde{J}^{X}[\phi]$
\begin{equation}
\label{mcurent} \tilde{J}^X[\phi]=\tilde{J}_{\mu}^X[\phi]\pa^\mu=\left(J_{\mu}^X[\phi] -
\f12\pa_{\mu}\chi \cdot\phi^2 + \f12 \chi\pa_{\mu}\phi^2\right)\pa^\mu.
\end{equation}
For any bounded region $\mathcal{D}$ in $\mathbb{R}^{3+1}$, using Stokes' formula, we have the energy identity
\begin{align}
\label{energyeq} \iint_{\mathcal{D}}\Box_g\phi
(\chi\phi+X(\phi))
 + K^X[\phi] +\chi\pa^\ga\phi\pa_\ga\phi-\f12 \Box_g\chi \cdot \phi^2d\vol=\int_{\pa \mathcal{D}}i_{\tilde{J}^X[\phi]}d\vol,
\end{align}
where $\pa\mathcal{D}$ denotes the boundary of the domain $\mathcal{D}$ and $i_Y d\vol$ denotes the contraction of the volume form $d\vol$
with the vector field $Y$ which gives the surface measure of the
boundary.

\section{The solution on the region $\{r\geq t+R\}$}
\label{section2}
In this section, we construct the solution of the nonlinear wave equation \eqref{largeeq} on the region $\{r\geq R+t\}$.
First we define some notations. For $R\leq r_1\leq r_2$, we use $S_{r_1, r_2}$ to denote the following outgoing null hypersurface emanating from the sphere with radius $r_1$
\[
 S_{r_1, r_2}:=\{u=-\frac{r_1}{2},\quad r_1\leq r\leq r_2\}.
\]
Similarly define $\bar C_{r_1, r_2}$ to be the following incoming null hypersurface emanating from the sphere with radius $r_2$
\[
 \bar C_{r_1, r_2}:=\{v=\frac{r_2}{2},\quad r_1\leq r\leq r_2\}.
\]
On the initial hypersurface $\mathbb{R}^3$, the annulus with radii $r_1$, $r_2$ is
\[
 B_{r_1, r_2}:=\{t=0,\quad r_1\leq r\leq r_2\}.
\]
We use $S_{r}$ to be short for $S_{r, \infty}$. Similarly we have $\bar C_{r}$ and $B_{r}$.

We use $\mathcal{D}_{r_1, r_2}$ to denote the region bounded by $S_{r_1, r_2}$, $B_{r_1, r_2}$, $\bar C_{r_1, r_2}$. Let $E[\phi](\Si)$ to be the energy flux for $\phi$ through
the hypersurface $\Si$ in the Minkowski space. In particular,
\[
 E[\phi](S_{r_1, r_2})=\int_{S_{r_1, r_2}}|\overline{\pa_v}\phi|^2r^2dvd\om,\quad E[\phi](\bar C_{r_1, r_2})=\int_{\bar C_{r_1, r_2}}|\overline{\pa_u}\phi|^2 r^2dud\om,
\]
where $\overline{\pa_u}=(\pa_u, \nabb)$. On the initial hypersurface
\[
 E[\phi](B_{r_1, r_2})=\int_{B_{r_1, r_2}}|\pa\phi|^2 dx.
\]
\subsection{Energy estimates}
In the energy identity \eqref{energyeq}, take the region $\mathcal{D}$ to be $\mathcal{D}_{r_1, r_2}$, the vector field $X=\pa_t$ and the function $\chi=0$. We
obtain the classical energy estimate
\begin{equation}
 \label{enesout}
2\iint_{D_{r_1, r_2}}\Box\phi\cdot \pa_t\phi
 d\vol+E[\phi](S_{r_1, r_2})+E[\phi](\bar C_{r_1, r_2})=E[\phi](B_{r_1, r_2}).
\end{equation}
We also need an integrated energy estimate adapted to the region $\mathcal{D}_{r_1, r_2}$. For some small positive constant $\ep$, depending only on $\a$, we construct
the vector field $X$ and choose the functions $f$, $\chi$ as follows
\[
X=f(r)\pa_r,\quad f=2\ep^{-1}-\frac{2\ep^{-1}}{(1+r)^{\ep}},\quad \chi=r^{-1}f.
\]
We then can derive from the energy identity \eqref{energyeq} that
\begin{equation}
\label{ILElg00}
I^{\ep}[\phi]_{r_1}^{r_2}\leq C_{\ep} (E[\phi](B_{r_1})+E[\phi](S_{r_1, r_2})+E[\phi](\bar C_{r_1, r_2})+D^{\ep}[\Box\phi]_{r_1}^{r_2}),
\end{equation}
where we denote
\[
 I^{\ep}[\phi]_{r_1}^{r_2}:=\iint_{\mathcal{D}_{r_1, r_2}}\frac{|\bar \pa\phi|^2}{(1+r)^{1+\ep}} dxdt,\quad D^{\ep}[F]_{r_1}^{r_2}:=
\iint_{\mathcal{D}_{r_1, r_2}}(1+r)^{1+\ep} |F|^2dxdt.
\]
Here $\bar \pa\phi=(\pa\phi, \frac{\phi}{1+r})$. The constant $C_\ep$ depends only on $\ep$ and is independent of $r_1$, $r_2$. For the derivation of the above estimate \eqref{ILElg00},
it is almost the same as Proposition 1 of \cite{yang3} or Proposition 2 of \cite{yang1}. The only point we have to point out here is that we use the fact that the
solution $\phi$ goes to zero as $r\rightarrow \infty$ on the initial
hypersurface. We thus can use a Hardy's inequality to control the integral of $\frac{|\phi|^2}{(1+r)^{2}}$. This is also the reason that we have $E[\phi](B_{r_1})$, which is
$E[\phi](B_{r_1, \infty})$ according to our notations, instead of $E[\phi](B_{r_1, r_2})$ on the right hand side of the above estimate \eqref{ILElg00}.

Combine the above two estimates \eqref{enesout}, \eqref{ILElg00}. We derive the following integrated energy estimates
\begin{prop}
 \label{propILEout}
We have
\begin{align}
\label{ILElarout}
E[\phi](S_{r_1, r_2})+E[\phi](\bar C_{r_1, r_2})+I^{\ep}[\phi]_{r_1}^{r_2}\leq C_{\ep} (E[\phi](B_{r_1})+D^\ep[\Box\phi]_{r_1}^{r_2})
\end{align}
for some constant $C_\ep$ depending only on $\ep$.
\end{prop}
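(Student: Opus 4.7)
The plan is to combine the classical energy identity \eqref{enesout} with the integrated local energy estimate \eqref{ILElg00} via a small-parameter Cauchy--Schwarz absorption argument. The only bulk term obstructing \eqref{enesout} from directly yielding a flux bound is the integral $\iint_{\mathcal{D}_{r_1,r_2}} \Box\phi \cdot \pa_t\phi \, d\vol$, and the natural way to dispose of it is by splitting with weight $(1+r)^{1+\ep}$, so one factor lands in $D^\ep[\Box\phi]_{r_1}^{r_2}$ and the other in $I^\ep[\phi]_{r_1}^{r_2}$.

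Concretely, I would first apply Cauchy--Schwarz with a small parameter $\eta>0$ to obtain
\[
2\left|\iint_{\mathcal{D}_{r_1,r_2}} \Box\phi \cdot \pa_t\phi \, d\vol\right| \leq \eta\, I^\ep[\phi]_{r_1}^{r_2} + \eta^{-1}\, D^\ep[\Box\phi]_{r_1}^{r_2},
\]
and insert this into \eqref{enesout}. Using the trivial inclusion $B_{r_1,r_2}\subset B_{r_1}$, which gives $E[\phi](B_{r_1,r_2})\leq E[\phi](B_{r_1})$, this yields
\[
E[\phi](S_{r_1,r_2}) + E[\phi](\bar C_{r_1,r_2}) \leq E[\phi](B_{r_1}) + \eta\, I^\ep[\phi]_{r_1}^{r_2} + \eta^{-1}\, D^\ep[\Box\phi]_{r_1}^{r_2}.
\]

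Next I would substitute this inequality into the right-hand side of \eqref{ILElg00}, replacing the two flux terms. The outcome is an inequality of the schematic form
\[
I^\ep[\phi]_{r_1}^{r_2} \leq C_\ep\, \eta\, I^\ep[\phi]_{r_1}^{r_2} + C_\ep \bigl(E[\phi](B_{r_1}) + (1+\eta^{-1})\, D^\ep[\Box\phi]_{r_1}^{r_2}\bigr).
\]
Choosing $\eta$ small enough, depending only on $\ep$, so that $C_\ep \eta \leq \f12$, I can absorb the first term on the right into the left and obtain a stand-alone estimate on $I^\ep[\phi]_{r_1}^{r_2}$ in terms of $E[\phi](B_{r_1})$ and $D^\ep[\Box\phi]_{r_1}^{r_2}$. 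Plugging this back into the flux bound and summing then yields \eqref{ILElarout}.

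The only delicate point in the argument is that, for the absorption to close, the constant $C_\ep$ in \eqref{ILElg00} must be independent of the radii $r_1,r_2$; this is precisely what the paragraph following \eqref{ILElg00} asserts. Given that, I anticipate no genuine obstacle: the proposition is a routine combination of the classical and $r$-weighted identities, with Cauchy--Schwarz providing the glue.
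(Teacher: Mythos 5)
Your proposal follows essentially the same route as the paper's proof: bound the bulk term $2\iint \Box\phi\cdot\pa_t\phi$ from the classical energy identity \eqref{enesout} by a weighted Cauchy--Schwarz splitting one factor into $I^\ep$ and the other into $D^\ep$, feed the resulting flux bound into \eqref{ILElg00}, and absorb the small $I^\ep$-contribution. The paper simply carries this out with the explicit choice $\eta = \tfrac12 C_\ep^{-1}$ rather than leaving the parameter free, and your remark about the $(r_1,r_2)$-uniformity of $C_\ep$ being the one point that must be checked is exactly right.
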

\begin{proof}
 For the derivation of the integrated energy estimate \eqref{ILElg00}, we refer to \cite{yang1} or \cite{yang3}. Then from the energy identity
\eqref{enesout}, we can estimate
\begin{align*}
 E[\phi](S_{r_1, r_2})+E[\phi](\bar C_{r_1, r_2})\leq E[\phi](B_{r_1, r_2})+\f12 C_\ep^{-1} I^\ep[\phi]_{r_2}^{r_2}+2C_\ep D^\ep[\Box\phi]_{r_2}^{r_2},
\end{align*}
where $C_\ep$ is the constant in the integrated energy estimate \eqref{ILElg00}. Then the integrated energy estimate \eqref{ILElg00} can be
improved to be
\[
 I^{\ep}[\phi]_{r_1}^{r_2}\leq 4C_{\ep} (E[\phi](B_{r_1})+D^{\ep}[\Box\phi]_{r_1}^{r_2}).
\]
This together with the previous estimate proves the proposition. Here according to our notation $C_\ep$ is a constant depending only on the
small constant $\ep$.
\end{proof}

Next we consider the $p$-weighted energy inequality. In the energy identity \eqref{energyeq}, we take
\[
 X=f\pa_v,\quad \chi=r^{p-1},\quad f=r^p,\quad 0\leq p\leq 2.
\]
We can compute
\begin{align*}
 &\int_{B_{r_1, r_2}}i_{\tilde{J}^X[\phi]}d\vol=\f12\int_{B_{r_1, r_2}}f(|\pa_v\psi|^2+|\nabb\psi|^2)-\pa_r(f r\phi^2)+f' r\phi^2  drd\om,\\
&\int_{S_{r_1, r_2}}i_{\tilde{J}^X[\phi]}d\vol=\int_{S_{r_1, r_2}}f|\pa_v\psi|^2-\f12 \pa_v(fr\phi^2)dvd\om,\\
&\int_{\bar C_{r_1, r_2}}i_{\tilde{J}^X[\phi]}d\vol=-\int_{\bar C_{r_1, s_2}}f|\nabb\psi|^2+f'r \phi^2+\f12 \pa_u(fr\phi^2)dud\om,\\
&\iint_{\mathcal{D}_{r_1, r_2}}K^X[\phi] +\chi\pa^\ga\phi\pa_\ga\phi-\f12 \Box\chi \cdot \phi^2d\vol\\&=\iint_{\mathcal{D}_{r_1, r_2}}\f12
f'|\pa_v\psi|^2 +(\chi-\f12 f')|\nabb\psi|^2- \f12\pa_v(f'r\phi^2)drdt.
\end{align*}
Here $\psi=r\phi$. We can do integration by parts on $\mathcal{D}_{r_1,r_2}$ to estimate the integral of $\pa_v(f'r\phi^2)$. Alternatively, we
can modify the current vector field $\tilde{J}^X[\phi]$ defined in line \eqref{mcurent} to be
\[
 \hat J^X[\phi]=\tilde{J}^X[\phi]+\f12 f'r\phi^2 \pa_v.
\]
Notice that
\begin{align*}
 -\int_{B_{r_1, r_2}}\pa_r(fr\phi^2)drd\om- \int_{\bar C_{r_1, r_2}}\pa_u(fr\phi^2)dud\om+\int_{S_{r_1, r_2}}\pa_v(fr\phi^2)dvd\om=0.
\end{align*}
Then from the energy identity \eqref{energyeq} and the above calculations, we obtain
\begin{equation}
\label{pWEout}
\begin{split}
&\iint_{\mathcal{D}_{r_1, r_2}}r^{p-1}(p|\pa_v\psi|^2 +(2-p)|\nabb\psi|^2)drdtd\om+\int_{\bar C_{r_1,
r_2}}r^p|\nabb\psi|^2dud\om\\&+\int_{S_{r_1, r_2}}r^p|\pa_v\psi|^2dvd\om=\int_{B_{r_1, r_2}}r^p|\overline{\pa_v}\psi|^2dr
d\om-2\iint_{\mathcal{D}_{r_1, r_2}}r^{p-1}\Box \phi \pa_v\psi dxdt.
\end{split}
\end{equation}
\subsection{Bootstrap argument}
We assume initially
\[
 E_0(R)\leq R^{-\b},\quad E_1(R)\leq R^{\ep_1}.
\]
for some positive constant $\b$, which will be determined later. The definition of $E_0(R)$ can be found in the introduction before
the statement of the main Theorem. We impose the following bootstrap assumption on the nonlinearity $F(\pa\phi)$ in the equation \eqref{largeeq}
\begin{equation}
 \label{bootout}
\sum\limits_{k\leq 4}\iint_{\mathcal{D}_{R}}|Z^k F|^2 r^{2+\a}dxdt\leq 2 R^{-\b}.
\end{equation}
Then the $p$-weighted energy inequality \eqref{pWEout} obtained in the end of the previous subsection implies that
\begin{align*}
  &\iint_{\mathcal{D}_{r_1, r_2}}r^{p-1}(p|\pa_v Z^k\psi|^2 +(2-p)|\nabb Z^k\psi|^2 )drdtd\om+\int_{S_{r_1, r_2}}r^p|\pa_v Z^k\psi|^2dvd\om+\int_{\bar C_{r_1,
r_2}}r^p|\nabb Z^k\psi|^2dud\om\\
&\leq r_1^{p-1-\a}\int_{B_{r_1, r_2}}r^{1+\a}|\overline{\pa_v}\psi|^2drd\om+\iint_{\mathcal{D}_{r_1, r_2}}\frac{p}{2}r^{p-1}|\pa_v Z^k\psi|^2drdtd\om
+\iint_{\mathcal{D}_{r_1, r_2}}\frac{2}{p}r^{p+1}|Z^k\Box\phi|^2dxdt\\
&\leq r_1^{p-1-\a}E_0(R)+\frac{p}{2}\iint_{\mathcal{D}_{r_1, r_2}}r^{p-1}|\pa_v Z^k\psi|^2drdtd\om+
r_1^{p-1-\a}\iint_{\mathcal{D}_{r_1, r_2}}\frac{2}{p}r^{2+\a}|Z^k\Box\phi|^2dxdt.
\end{align*}
The second term in the last line can be absorbed. Then let $r_2$ goes to infinity, we can obtain the following $p$-weighted energy estimate
\begin{equation}
 \label{pwelarge}
\begin{split}
 &\iint_{\mathcal{D}_{r_1}}r^{p-1}|\overline{\pa_v} Z^k\psi|^2 drdtd\om+\int_{S_{r_1}}r^p|\pa_v Z^k\psi|^2dvd\om+\int_{\bar C_{r_1,
r_2}}r^p|\nabb Z^k\psi|^2dud\om\les R^{-\b}r_1^{p-1-\a}
\end{split}
\end{equation}
for all $k\leq 4$, $r_2\geq r_1\geq R$, $0<p\leq 1+\a$. Here and in the following we make a convention that $A\les B$ means $A\leq CB$ for some constant $C$ depending only on $\a$ and is independent of $R$, $r_1$.

\bigskip

Note that the assumption \eqref{assumpla} in particular implies that
\begin{equation}
\label{phiinbd}
 \int_{\om}r^2|Z^k \phi(0, r, \om)|^2d\om \les R^{\ep_1},\quad k\leq 4, \quad r\geq R.
\end{equation}
Using the $p$-weighted energy inequality \eqref{pwelarge} when $p=1+\a$, on $S_{r_1}$, we can estimate
\begin{align}
\notag
 \int_{\om}|r Z^k\phi|^2(t, r, \om)d\om&\leq \int_{\om} |r Z^k\phi(0, r_1, \om)|^2d\om+\int_{S_{r_1}}r^{1+\a}|\pa_v (r Z^k\phi)|^2dvd\om \cdot \a^{-1}r_1^{-\a}\\
\label{phi1bd}
&\les \int_{\om} |r Z^k\phi(0, r_1, \om)|^2d\om+ R^{-\b}r_1^{-\a},\quad k\leq 4.
\end{align}
In particular, we have
\[
 \int_{\om}|r Z^k\phi|^2(t, r, \om)d\om\les R^{\ep_1},\quad k\leq 4.
\]
We also need an inequality to estimating $\pa_u\psi$, $\psi=r\phi$. From the energy inequality \eqref{ILElarout}, we can show that
\begin{equation}
 \label{uenergy}
\begin{split}
\int_{\bar C_{R, r_2}}|\overline{\pa_u} Z^k\psi|^2dud\om\les E[\phi](\bar C_{R, r_2})+\int_{\om}r |Z^k\phi|^2(u_{r_2},v_{r_2}, \om)d\om\les  R^{\ep_1},\quad k\leq 4,\quad r_2\geq R,
\end{split}
\end{equation}
where $u_r=\frac{r-R}{2}$, $v_r=\frac{r+R}{2}$, $\overline{\pa_u}=(\pa_u, \nabb)$.

\bigskip

We now improve the bootstrap assumption \eqref{bootout}. The quadratic part of the nonlinearity $F$ is a null form $Q(\phi, \phi)$. Note that
\[
 Z^k Q(\phi, \phi)=\sum\limits_{k_1+k_2\leq k}Q(Z^{k_1}\phi, Z^{k_2}\phi).
\]
Here $Q$ denotes a general null form. They may stand for different null forms with different constants $A^{\mu\nu}$. For details we refer to e.g.
\cite{klNullc}. We denote
\[
\phi_1=Z^{k_1}\phi, \quad \phi_2=Z^{k_2}\phi,\quad \psi_1=r\phi_1, \quad \psi_2=r\phi_2.
\]
This $\phi_1$ is only a notation and should not be confused with the initial data $\phi_1$.
Note that
\[
 |r^2Z^kQ(\phi, \phi)|\les \sum\limits_{k_1+k_2\leq k}|\bar\pa\psi_1||\phi_2|+|\nabb\psi_1|(|\pa_t\psi_2|+|\pa_v\psi_2|)+
|\nabb\psi_1||\nabb\psi_2|+|\pa_u\psi_1||\pa_v\psi_2|.
\]
For the proof of the above inequality, see \cite{yang1}. Then by using Sobolev embedding, for $k\leq 4$, we have the estimate
\begin{equation}
\label{nullformest}
\begin{split}
 \int_{\om}|r^2 Z^k Q(\phi, \phi)|^2d\om \les  & \sum\limits_{k_1\leq 4, k_2\leq 4} \int_{\om}|\bar \pa\psi_1|^2d\om\cdot \int_{\om}|\phi_2|^2d\om+
\sum\limits_{k_1\leq 4, k_2\leq 2}\int_{\om}|\overline{\pa_u}\psi_1|^2d\om \cdot\int_{\om}|\pa_v\psi_2|^2d\om\\
&+\sum\limits_{k_1\leq 2, k_2\leq 4}
\int_{\om}|\pa_u\psi_1|^2d\om \cdot\int_{\om}|\pa_v\psi_2|^2d\om+\int_{\om}|\nabb Z^4\psi|^2|\pa_t\psi|^2d\om.
\end{split}
\end{equation}
We use estimate \eqref{phiinbd} to bound $\|\phi_2\|_{L^2(\mathbb{S}^2)}$, $k_2\leq 4$. For $\|\pa_v\psi_2\|_{L^2(\mathbb{S}^2)}$, $k_2\leq 3$, we can estimate
\begin{equation*}
\label{rapavpsi}
\begin{split}
 \sup\limits_{v=\frac{r}{2}, -\frac{r}{2}\leq u\leq -\frac{R}{2}}r^\a\|\pa_v\psi_2\|_{L^2(\mathbb{S}^2)}^2&\les r^\a\int_{\om}|\pa_v\psi_{2}|^2(0, r,\om)d\om+
\left|\int_{\bar C_{R, r}}\pa_u(r^\a|\pa_v\psi_2|^2)dud\om\right|\\
& \les r^\a\int_{\om}|\pa_v\psi_{2}|^2(0, r,\om)d\om+\int_{\bar C_{R, r}}r^{\a-1}|\pa_v\psi_2|^2+r^\a|\pa_u\pa_v\psi_2||\pa_v\psi_2|dud\om\\
&\les r^\a\int_{\om}|\pa_v\psi_{2}|^2(0, r,\om)d\om+\int_{\bar C_{R, r}}r^{\a-1}|\pa_v\psi_2|^2 dud\om\\
&\quad+\int_{\bar C_{R, r}}r^{\a+1}(|\nabb\Om\phi_2|^2+|rZ^{k_2}F|^2)dud\om,
\end{split}
\end{equation*}
where we have used the equation for $\phi_2$ in null coordinates $(u, v, \om)$. But the coordinate $(0, r, \om)$ appeared in the previous estimate is with respect to the polar coordinate
$(t, r, \om)$. Integrate the above estimate with respect to $r$ from $R$ to infinity. We obtain
\begin{equation}
\label{suppavpsi}
\begin{split}
 \int_{R}^\infty\sup\limits_{v=\frac{r}{2}, -\frac{r}{2}\leq u\leq -\frac{R}{2}}r^\a\|\pa_v\psi_2\|_{L^2(\mathbb{S}^2)}^2 dr &\les
\int_{B_R}r^\a|\pa_v\psi_2|^2drd\om +\iint_{\mathcal{D}_R}r^{\a-1}|\pa_v\psi_2|^2drdt d\om \\
&\quad+\iint_{\mathcal{D}_R}r^{\a-1}|\nabb \Om\psi_{2}|^2
+r^{\a+3}|Z^{k_2}F|^2 d td\om dr\\
&\les R^{-1-\b}.
\end{split}
\end{equation}
Here we have used the assumption on the initial data that $E_0(R)\leq R^{-\b}$. The bound for $F$ follows from the bootstrap assumption
\eqref{bootout}. The estimates for
$|\pa_v\psi_2|^2$, $|\nabb\Om \psi_2|^2$ are due to the $p$-weighted energy inequality \eqref{pwelarge} and the fact that $k_2\leq 3$.

Similarly, for $\|\pa_u\psi_1\|_{L^2(\mathbb{S}^2)}$, $k_1\leq 3$, we have
\begin{align*}
\int_{R}^{\infty}\sup\limits_{u=-\frac{r}{2}, v\geq\frac{r}{2}} r^{-1}\|\pa_u\psi_1\|_{L^2(\mathbb{S}^2)}^2dr
&\les R^{-1}\int_{B_{R}}|\pa_u\psi_1|^2drd\om+\iint_{\mathcal{D}_R}r^{-2}|\pa_u\psi_1|^2dtdrd\om\\
&+\iint_{\mathcal{D}_R}|\nabb \Om \phi_1|^2+|rZ^{k_1}F|^2drdtd\om\\
&\les R^{-1+\ep_1}+R^{-1+\ep}I^{\ep}[\phi_1]_{R}^{\infty}+R^{-2-\a-\b}\\
&\les R^{-1+\ep_1+\ep}.
\end{align*}
Here the estimate for the integrated energy estimate $I^\ep[\phi_1]_{R}^{\infty}$ follows from \eqref{ILElarout} in which the bounds for
$D^\ep[Z^{k_1}F]_{R}^{\infty}$ are guaranteed by the bootstrap assumption \eqref{bootout}.

\bigskip

On the right hand side of the null form estimate \eqref{nullformest}, we are left to estimate the special term
 $|\nabb \psi_1||\pa_t\psi|$, $\psi_1=Z^4\psi$. We can show that
\begin{align*}
 \iint_{\mathcal{D}_{R}}|\nabb\psi_1|^2|\pa_t\psi|^2 r^{\a}drdtd\om &\les\int_{R}^\infty\int_{\frac{r}{2}}^\infty (v+\frac{r}{2})^\a\int_{\om}|\nabb\psi_1|^2\cdot \int_{\om} |\psi_{2}|^2d\om drdv\\
&\les \int_{R}^\infty\int_{\frac{r}{2}}^\infty (v+\frac{r}{2})^\a\int_{\om}|\nabb\psi_1|^2\cdot (\int_{\om}|\psi_{2}|^2(r, \frac{r}{2},\om)d\om +R^{-\b}(v+\frac{r}{2})^{-\a}) drdv\\
&\les R^{-2\b-\a}+\int_{R}^{\infty}\int_{-\frac{r}{2}}^{-\frac{R}{2}}\int_{\om}(\frac{r}{2}-u)^\a|\nabb\psi_1|^2d\om \cdot \int_{\om} |\psi_{2}|^2(r, -\frac{r}{2},\om)d\om du dr.
\end{align*}
Here we have used estimate \eqref{phi1bd} and $k_2\leq 3$. We note that when $u$ is fixed, the $p$-weighted energy inequality \eqref{pwelarge} implies that
\begin{align*}
 \int_{-\frac{r}{2}}^{-\frac{R}{2}}\int_{\om}(\frac{r}{2}-u)^{1+\a}|\nabb\psi_1|^2d\om du\leq \int_{\bar C_{R, r}}r^{1+\a}|\nabb\psi_1|^2 dud\om \les R^{-\b}.
\end{align*}
Thus we can show that
\[
 \iint_{\mathcal{D}_{R}}|\nabb\psi_1|^2|\pa_t\psi|^2 r^{\a}drdtd\om \les R^{-2\b-\a}+\int_{R}^{\infty} R^{-\b-1}\int_{\om} |\psi_{2}|^2(0, r,\om)d\om dr\les R^{-2\b-\a}+R^{-1-\b+\ep_1}.
\]
Therefore from the null form estimate \eqref{nullformest}, we can derive that
\begin{align*}
 &\iint_{\mathcal{D}_{R}}r^{\a}|r^2 Z^kQ(\phi,\phi)|^2drdtd\om \\
&\les R^{\ep_1}\iint_{\mathbb{D}_R}r^{-2+\a}|\pa\psi_1|^2dtdrd\om+\int_{R}^\infty\sup\limits_{u}r^\a\|\pa_v\psi_2\|_{L^2(\mathbb{S}^2)}^2
\int_{\bar C_{R, r}}|\overline{\pa_u}\psi_1|^2dud\om dr\\
&\quad +\int_{R}^{\infty}\sup\limits_{v}r^{-1}\|\pa_u\psi_1\|_{L^2(\mathbb{S}^2)}^2 \int_{S_{r_1}}r^{1+\a}|\pa_v\psi_2|^2dvd\om dr_1+R^{-2\b-\a}+R^{-1-\b+\ep_1}\\
&\les R^{2\ep_1-1+\a+\ep}+R^{\ep_1-1-\b}+R^{-1+\ep_1+\ep-\b}+R^{-2\b-\a}+R^{-1-\b+\ep_1}.
\end{align*}
For cubic or higher order nonlinearities, we first conclude from estimate \eqref{suppavpsi} that
\[
 \int_{0}^{r_1-R}\int_{\om}r_1^{\a}(|\pa_vZ^k\psi|^2+|\pa_v\pa_t Z^k\psi|^2)d\om dt\les R^{-1-\b},\quad k\leq 2.
\]
In particular, we have
\[
 \int_{\om}|\pa_v Z^k\psi|^2d\om \les R^{-1-\b}r_1^{-\a}, \quad k\leq 2.
\]
Since we have shown that
\[
 \int_{\om}|Z^k\psi|^2d\om \les R^{\ep_1},\quad k\leq 4,
\]
we then have
\[
 \int_{\om}|\pa Z^k\psi|^2d\om\les R^{\ep_1},\quad k\leq 2.
\]
Thus for cubic or higher order nonlinearities, we can bound
\begin{align*}
\iint_{\mathcal{D}_{R}}|Z^k (F-Q)|^2 r^{2+\a}dxdt\les  \sum\limits_{k\leq 4}\iint_{\mathcal{D}_R}|\pa Z^k\phi|^2 r^{-4+2+\a}R^{2(N-2)\ep_1} dxdt
\les R^{(2N-3)\ep_1+\a+\ep-1}.
\end{align*}
Here we recall that $N$ is the order of the highest order nonlinearity.
To summarize, we have shown that
\[
 \iint_{\mathcal{D}_{R}}|Z^k F|^2 r^{2+\a}dxdt\les R^{(2N-3)\ep_1+\a+\ep-1}+R^{-1+\ep_1+\ep-\b}+R^{-2\b-\a}.
\]
If we take
\begin{equation}
\label{defb}
\b=1-2\a,\quad \ep=\frac{\a}{20},\quad \ep_1=\frac{\a}{2N},
\end{equation}
we then have
\[
 \iint_{\mathcal{D}_{R}}|Z^k F|^2 r^{2+\a}dxdt\les R^{-\b-\frac{1}{5}\a}.
\]
According to our notations, the implicit constant in the
above estimate depends only on $\a$. Hence let the constant $R$ be sufficiently large, depending only on $\a$,
we then can improve the bootstrap assumption \eqref{bootout}. Once we have improved the bootstrap assumption \eqref{bootout},
the proof for the existence of a unique solution of the equation \eqref{largeeq} on the region $\{r\geq R+t\}$ is standard,
see the end of \cite{yang1}.
\begin{remark}
 In particular, the small constant $\ep_0$ in the main Theorem can be $\ep_0=\frac{1}{2N}$.
\end{remark}

\section{The solution on $\{r\leq R+t\}$}
We have constructed the solution of the equation \eqref{largeeq} outside the light cone $\{r\geq R+t\}$. In this section, we will
prove that the solution also exists globally in the future inside the light cone which is the region
$\{r\leq R+t\}$.
We use the foliation
\[
 S_\tau:=\{u=u_\tau=\frac{\tau-R}{2},\quad \frac{\tau+R}{2}=v_\tau\leq v\},\quad \Si_\tau:=\{t=\tau, \quad r\leq R\}\cup S_\tau.
\]
The energy flux through $\Si_\tau$ for the scalar field $\phi$ is $E[\phi](\tau)$. For $\tau_2\geq \tau_1$, we define
\[
 I^\ep[\phi]_{\tau_1}^{\tau_2}:=\int_{\tau_1}^{\tau_2}\int_{\Si_\tau}\frac{|\bar\pa\phi|^2}{(1+r)^{1+\ep}}dxd\tau,\quad D^\ep[F]_{\tau_1}^{\tau_2}:=
\int_{\tau_1}^{\tau_2}\int_{\Si_\tau}(1+r)^{1+\ep}|F|^2dxd\tau.
\]
We have the integrated energy estimate and the energy estimate
\begin{equation}
\label{ILElarge}
 E[\phi](\tau_2)+I^{\ep}[\phi]_{\tau_1}^{\tau_2}+\int_{\tau_1}^{\tau_2}\int_{S_\tau}\frac{|\nabb\phi|^2}{1+r}dxd\tau\les E[\phi](\tau_1)+D^{\ep}[F]_{\tau_1}^{\tau_2},
\end{equation}
see Proposition 1 of \cite{yang3} or Proposition 2 of \cite{yang1}. As before, the implicit constant here depends only on $\ep$.
\subsection{The $p$-weighted energy inequality}
As we have discussed in the introduction, the smallness needed to close the bootstrap argument for nonlinear problem in this paper comes from the radius $R$ while in the previous work e.g. \cite{yang1} the smallness comes from the data. In particular, the previous argument can not be applied directly to the settings in this paper. Instead we need an argument with all the dependence of the constants on the radius $R$. To be more precise, we first consider one of the key ingredients the $p$-weighted energy inequality. We recall the $p$-weighted energy identity originally introduced by Dafermos-Rodnianski in \cite{newapp}
\begin{align*}
&\int_{S_{\tau_2}^v} r^p (\pa_v\psi)^2 dvd\om +\int_{\tau_1}^{\tau_2}\int_{S_\tau^v}2r^{p+1}F\cdot\pa_v\psi dvd\tau d\om\\
& +\int_{\tau_1}^{\tau_2}\int_{S_\tau^v} r^{p-1}  \left (p(\pa_v\psi)^2 +
(2-p) |\nabb\psi|^2\right)dvd\tau d\om +\int_{\bar C(\tau_1, \tau_2,v)} r^p |\nabb\psi|^2 du d\om\\
=& \int_{S_{\tau_1}^v}r^p (\pa_v\psi)^2 dvd\om +\int_{\tau_1}^{\tau_2} r^p \left (|\nabb\psi|^2- (\pa_v\psi)^2\right)d\om d\tau |_{r=R},
\end{align*}
where $\psi=r\phi$, $F=\Box\phi$. Note that the boundary term on $\{r=R\}$ is proportional to $R^p$. Hence we can simply take $p=0$ to estimate it. First for any $\tau$, we have
\[
\int_{S_\tau^v}(\pa_v\psi)^2dvd\om\leq 5 E[\phi](\tau).
\]
For the proof of this inequality, see e.g. Corollary 1 in \cite{yang1}.
For the inhomogeneous term $F\pa_v\psi$ when $p=0$, we can estimate it as follows:
\begin{align*}
|\int_{\tau_1}^{\tau_2}\int_{S_\tau^v}rF\cdot\pa_v\psi dvd\tau d\om |&\les D^\ep[F]_{\tau_1}^{\tau_2}+E[\phi](\tau_1).
\end{align*}
Therefore for general $p$, we have the estimate for the boundary term
  \begin{align*}
\left|\int_{\tau_1}^{\tau_2} r^p \left (|\nabb\psi|^2- (\pa_v\psi)^2\right)d\om d\tau |_{r=R}\right|\les R^p
(D^\ep[F]_{\tau_1}^{\tau_2}+E[\phi](\tau_1)).
\end{align*}
Since the boundary term on the incoming null hypersurface $\bar C(\tau_1, \tau_2, v)$ has a good sign, to obtain a useful
estimate from the $p$-weighted energy identity, it suffices to
estimate the integral of the inhomogeneous term $r^{p+1}F\pa_v\psi$ in the above $p$-weighted energy identity. On $S_\tau$, we
control it as follows
$$2r^{p+1}|F\pa_v\psi| \leq r^p|\pa_v\psi|^2\tau_+^{-1-\ep}+r^{p+2}|F|^2\tau_+^{1+\ep},\quad \tau_+=1+\tau.
$$
The integral of the first term $r^p|\pa_v\psi|^2\tau_+^{-1-\ep}$ will be bounded by using Gronwall's inequality. Thus we derive
 \begin{align} \notag
&\int_{S_{\tau_2}}r^p(\pa_v\psi)^2dv d\om+\int_{\tau_1}^{\tau_2}\int_{S_\tau}r^{p-1}(p|\pa_v\psi|^2+(2-p)|\nabb\psi|^2)dvd\om d\tau\\
\label{pWEineq} &\les R^p(E[\phi](\tau_1)+ D^\ep[F]_{\tau_1}^{\tau_2})+\int_{S_{\tau_1}}r^p|\pa_v\psi|^2dv d\om+ \int_{\tau_1}^{\tau_2}\tau_+^{\ep}D_+^{p-1}[F]_{\tau}^{\tau_2}d\tau
+(\tau_1)_+^{1+\ep}D_+^{p-1}[F]_{\tau_1}^{\tau_2},
\end{align}
where
\[
 D_+^{\a}[F]_{\tau_1}^{\tau_2}:=\int_{\tau_1}^{\tau_2}\int_{S_\tau}(1+r)^{1+\a}|F|^2dxd\tau.
\]

\subsection{The data}
To study the equation on the region $\{r\leq t+R\}$, we need the initial data on the outgoing null hypersurface $S_0$, that is
$\{v\geq \frac{R}{2}, u=-\frac{R}{2}\}$. The data on the ball with radius $R$ can be arbitrarily
small according to our assumptions. It suffices to understand the solution on the outgoing null hypersurface $S_0$
(or using the notation in Section \ref{section2} $S_{R, \infty}$). Recall that we already constructed the solution on the region $\{r\geq t+R\}$
in the previous section. From the $p$-weighted energy
inequality \eqref{pwelarge} we have
\begin{equation}
\label{data0}
 \int_{S_0}r^{1+\a}|\pa_v Z^k\psi|^2dvd\om \les R^{-\b}, \quad \b=1-2\a,\quad k\leq 4.
\end{equation}
Here note that we have fixed $\b$ in line \eqref{defb}.
For the energy flux, we can assume
\begin{equation}
\label{data1}
 \sum\limits_{k\leq 4}\int_{S_0}|\overline{\pa_v} Z^k\phi|^2r^2dvd\om \les R^{-\b-\a-1}.
\end{equation}
This is consistent with the previous inequality as $|\pa_v Z^k\psi|^2$ is the main part of $|\overline{\pa_v} Z^k\phi|^2r^2$. A rigorous way to see this is to use the $p$-weighted energy
inequality \eqref{pwelarge}. We have
\[
 \int_{R}^\infty \int_{S_{r, \infty}}r^\a|\overline{\pa_v}Z^k\psi|^2dvd\om dr\les R^{-\b}.
\]
Since the data inside the ball with radius $R$ is small, we also can show (simply replacing $R$ in Section \ref{section2} with $\f12 R$) that
\[
 \int_{\f12 R}^R \int_{S_{r, \infty}}r^\a|\overline{\pa_v}Z^k\psi|^2dr\les R^{-\b}.
\]
In particular, we can choose a slice such that
\[
 E[Z^k\phi](S_{r_0, \infty})\leq \int_{S_{r_0,\infty}}|\overline{\pa_v}Z^k\psi|^2dvd\om +r_0\int_{\om}|\phi|^2(0, r_0, \om)d\om  \les R^{-\b-\a-1}=R^{-2+\a}
\]
for some $r_0\in (\f12 R, R)$. Here we note that the initial data on the ball with radius $R$ can be arbitrarily small.

In particular the data on $S_0$ satisfy the above two estimates \eqref{data0} and \eqref{data1}. Here recall that the implicit constant depends only
on $\a$.

\subsection{Bootstrap argument}
We now use the above boundary conditions to establish the decay of the energy flux. We impose the following bootstrap assumptions on the nonlinearity $F$ for all $k\leq 4$
\begin{equation}
\label{bsFlarge}
D^\ep[Z^k F]_{\tau_1}^{\tau_2}\leq 2\min\{R^{-\b}(\tau_1)_+^{-1-\a}, R^{-2+\a}, R^{-1-\b-\ep}(\tau_1)_+^{-\a}\},\quad D^\a_{+}[Z^kF]_{\tau_1}^{\tau_2}\leq 2\tau_+^{-1-\a}R^{-\b}.
\end{equation}
We show the decay of $E[Z^k\phi](\tau)$. Let $p=1+\a$ in the $p$-weighted energy inequality \eqref{pWEineq}. We have
\[
 \int_{S_{\tau_2}}r^{1+\a}|\pa_v\psi|^2dvd\om+\int_{\tau_1}^{\tau_2}\int_{S_\tau}r^\a|\pa_v\psi|^2dvd\om d\tau \les R^{1+\a-2+\a}+R^{-\b}=R^{-\b}.
\]
Hence we can choose a dyadic sequence $\{\tau_n\}$ such that
\[
 \int_{S_{\tau_n}}r^{\a}|\pa_v\psi|^2dvd\om \les (\tau_{n})_+^{-1}R^{-\b}.
\]
Interpolation leads to
\[
 \int_{S_{\tau_n}}r|\pa_v\psi|^2dvd\om \les R^{-\b}(\tau_n)_+^{-\a}.
\]
Then take $p=1$ in the $p$-weighted energy inequality \eqref{pWEineq}. We derive
\begin{align*}
 \int_{\tau_n}^{\tau'}E[\phi](\tau)d\tau &\les R^{-\b}(\tau_n)_+^{-\a}+R E[\phi](\tau_n)+R^{1+\ep}(E[\phi](\tau_n)+(\tau_n)_+^{-\a}R^{-1-\b-\ep})\\
&\les R^{-\b}(\tau_n)_+^{-\a}+R^{1+\ep}E[\phi](\tau_n),\quad \tau'\geq \tau_n.
\end{align*}
In the energy estimate \eqref{ILElarge}, set $\tau_1=0$. We have
\[
 E[\phi](\tau)\les R^{-2+\a}.
\]
For $\tau'\geq \tau$, we have
\[
 E[\phi](\tau')\les E[\phi](\tau)+R^{-\b}\tau_+^{-1-\a}.
\]
We thus can conclude that
\[
 (\tau'-\tau_n)E[\phi](\tau')\les R^{-\b}(\tau_n)_+^{-\a}+R^{1+\ep}E[\phi](\tau_n).
\]
In particular, we have
\begin{equation*}
 E[\phi](\tau)\les \tau_+^{-1}(R^{-\b}+R^{1+\ep}R^{-2+\a})\les \tau_+^{-1}R^{-\b}.
\end{equation*}
This then implies that
\[
 E[\phi](\tau_{n+1})\les R^{-\b}(\tau_n)_+^{-1-\a}+R^{1+\ep -\b}(\tau_{n})_+^{-2}.
\]
As $\tau_n$ is dyadic, we then infer that
\[
 E[\phi](\tau)\les R^{-\b}\tau_+^{-1-\a}+R^{1+\ep-\b}\tau_+^{-2}.
\]
Summarizing, we have the following energy decay estimate
\begin{prop}
\label{prop2}
 For any $k\leq 4$, we have
\[
 I^\ep[Z^k \phi]_{\tau_1}^{\tau_2}+ D^\ep[Z^k F]_{\tau_1}^{\tau_2}+E[Z^k\phi](\tau)\les A(\tau),
\]
where
\[
 A(\tau):=\min\{R^{-\b}\tau_+^{-1-\a}+R^{1+\ep-\b}\tau_+^{-2},\quad R^{-2+\a},\quad R^{-\b}\tau_+^{-1} \}.
\]
In particular, we have
\begin{equation*}
 E[Z^k\phi](\tau)\les\min\{ R^{-\ga}\tau_+^{-1-\a},R^{-2+\a}\},\quad \ga=\b-(1+\ep)\a.
\end{equation*}
\end{prop}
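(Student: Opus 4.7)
The plan is to present the proposition essentially as a summary of the bootstrap computation carried out in the paragraphs immediately preceding its statement, extended to all $k \le 4$ and consolidated into a single three-part bound on the energy flux. The derivation for $k = 0$ is already complete in the excerpt: under the bootstrap hypothesis \eqref{bsFlarge}, the $p$-weighted energy inequality \eqref{pWEineq} with $p = 1+\a$ yields $\int_{S_{\tau_2}} r^{1+\a}|\pa_v\psi|^2 + \int\!\int r^\a|\pa_v\psi|^2 \les R^{-\b}$; pigeonholing along a dyadic sequence $\{\tau_n\}$ and interpolating with the trivial flux bound gives $\int_{S_{\tau_n}} r|\pa_v\psi|^2 \les R^{-\b}(\tau_n)_+^{-\a}$; feeding this into \eqref{pWEineq} with $p = 1$, together with the initial energy bound $E[\phi](\tau) \les R^{-2+\a}$ obtained from \eqref{ILElarge} with $\tau_1 = 0$, produces first the intermediate decay $E[\phi](\tau) \les R^{-\b}\tau_+^{-1}$ and then the sharper $E[\phi](\tau) \les R^{-\b}\tau_+^{-1-\a} + R^{1+\ep-\b}\tau_+^{-2}$. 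The three-term minimum in the definition of $A(\tau)$ simply records all three of these simultaneously.

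To upgrade to $k \le 4$, I would commute equation \eqref{largeeq} with the vector fields $Z = \{\pa_t, \Om_{ij}\}$. Since $[\Box, \pa_t] = [\Box, \Om_{ij}] = 0$ and these commutators act naturally on the null frame (in particular $[\pa_v, \Om_{ij}] = 0$), each $Z^k\phi$ satisfies $\Box Z^k\phi = Z^k F$ with no additional commutator terms to control, and the derivation above applies verbatim with $\psi$ replaced by $Z^k \psi$. The bootstrap hypothesis \eqref{bsFlarge} already carries the required bound on $D^\ep[Z^k F]$ and $D^\a_+[Z^k F]$ for all $k \le 4$. The bound $I^\ep[Z^k\phi]_{\tau_1}^{\tau_2} \les A(\tau_1)$ follows immediately from the integrated energy estimate \eqref{ILElarge} applied to $Z^k\phi$, since its right-hand side $E[Z^k\phi](\tau_1) + D^\ep[Z^k F]_{\tau_1}^{\tau_2}$ is controlled by $A(\tau_1)$ by the previous step and the bootstrap, while $D^\ep[Z^k F]_{\tau_1}^{\tau_2} \les A(\tau_1)$ is just \eqref{bsFlarge} itself.

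The ``in particular'' statement is then a short algebraic check. Since $\b - \ga = (1+\ep)\a$, the first term $R^{-\b}\tau_+^{-1-\a}$ of $A(\tau)$ is automatically dominated by $R^{-\ga}\tau_+^{-1-\a}$. For the second term a direct rearrangement gives
\[
R^{1+\ep-\b}\tau_+^{-2} = R^{-\ga}\tau_+^{-1-\a}\cdot R^{(1+\ep)(1-\a)}\tau_+^{-(1-\a)},
\]
which is bounded by $R^{-\ga}\tau_+^{-1-\a}$ in the regime $\tau_+ \ge R^{1+\ep}$. In the complementary regime $\tau_+ \le R^{1+\ep}$, the uniform bound $R^{-2+\a}$ in $A(\tau)$ takes over: using $\b = 1 - 2\a$ and $\ep = \a/20 < \a$ one verifies $R^{-2+\a} \le R^{-\ga}\tau_+^{-1-\a}$ throughout this regime, since the required inequality $\tau_+^{1+\a} \le R^{2-\a-\ga} = R^{1+(2+\ep)\a}$ holds whenever $\tau_+ \le R^{1+\ep}$.

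The main obstacle in this section is not the formulation of the proposition itself but the closing of the bootstrap assumption \eqref{bsFlarge} on the nonlinearity. This will require a null-form estimate analogous to \eqref{nullformest} carried out inside the cone, combined with Sobolev embedding on the spheres, with all $R$-dependencies tracked carefully so that each contribution fits inside the three-term minimum on the right-hand side of \eqref{bsFlarge}. With the choice $\b = 1 - 2\a$, $\ep = \a/20$, $\ep_1 = \a/(2N)$ already fixed in \eqref{defb}, there is enough room in the exponents to absorb the implicit constants by taking $R$ sufficiently large depending only on $\a$, but the verification is not immediate and is where the actual work of the section lies.
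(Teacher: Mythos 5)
Your proposal is correct and follows essentially the same route as the paper: the energy flux decay is a direct restatement of the pigeonhole/interpolation argument carried out just before the proposition (applied to $Z^k\phi$ after commuting, which introduces no new terms since $[\Box,Z]=0$), the $I^\ep$ bound comes from \eqref{ILElarge}, and the $D^\ep[Z^kF]$ bound is a restatement of the bootstrap assumption \eqref{bsFlarge}. Your explicit algebraic check of the ``in particular'' estimate (splitting at $\tau_+ = R^{1+\ep}$ and using $\ep<\a$) is a correct fill-in of a step the paper leaves to the reader.
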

\begin{proof}
The estimate for the energy flux $E[\phi](\tau)$ follows from
the above argument. The estimate for the integrated energy $I^\ep[Z^k\phi]_{\tau_1}^{\tau_2}$ follows from \eqref{ILElarge} and the bound
for the inhomogeneous term $F$ is a restatement of the bootstrap assumption \eqref{bsFlarge}.

\end{proof}
The following lemma will be used to show the $C^1$ estimate of the solution.
\begin{lem}
\label{uvwestlar}
 \[
  \int_{\tau_1}^{\tau_2}\int_{\Si_\tau\cap \{r\geq 1\}}r^{1-\ep} |\pa_u\pa_v Z^k\phi|^2 dxd\tau\les A(\tau_1),\quad \forall k\leq 3.
 \]
\end{lem}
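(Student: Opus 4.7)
The plan is to use the wave equation to rewrite $\pa_u\pa_v Z^k\phi$ in terms of quantities already controlled by Proposition \ref{prop2}. Since each commutator vector field $Z\in\{\pa_t,\Om_{ij}\}$ is Killing and therefore commutes with $\Box$, we have $\Box Z^k\phi = Z^k F$. Using the null/spherical form $\Box = -\pa_u\pa_v + \frac{2}{r}\pa_r + \frac{1}{r^2}\lap$, this gives the pointwise identity
\[
\pa_u\pa_v Z^k\phi \;=\; \frac{2}{r}\pa_r Z^k\phi + \frac{1}{r^2}\lap Z^k\phi - Z^k F.
\]
Squaring and multiplying by $r^{1-\ep}$ reduces the desired estimate to three pieces: $\int r^{-1-\ep}|\pa_r Z^k\phi|^2$, $\int r^{-3-\ep}|\lap Z^k\phi|^2$, and $\int r^{1-\ep}|Z^k F|^2$, each integrated over $[\tau_1,\tau_2]\times(\Si_\tau\cap\{r\geq 1\})$.

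The first piece is bounded by $I^\ep[Z^k\phi]_{\tau_1}^{\tau_2}$ since $|\pa_r Z^k\phi|\leq|\bar\pa Z^k\phi|$. The third is bounded by $D^\ep[Z^k F]_{\tau_1}^{\tau_2}$, because $r^{1-\ep}\leq (1+r)^{1+\ep}$ on $\{r\geq 1\}$. By Proposition \ref{prop2} both contributions are $\les A(\tau_1)$ for any $k\leq 4$, and so the whole difficulty is concentrated in the Laplacian term.

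The main obstacle is that the naive bound $|\lap Z^k\phi|\les r^{-2}|\Om^2 Z^k\phi|$ brings in $Z^{k+2}\phi$, which is outside the available range $k\leq 4$ as soon as $k=3$. To reach the announced range $k\leq 3$, I would use the pointwise sphere identity
\[
r^2|\nabb g|^2 \;=\; \sum_{i<j}|\Om_{ij}g|^2,
\]
which is immediate from $|x\wedge\nabla g|^2 = r^2|\nabla g|^2 - r^2|\pa_r g|^2$, together with the Killing property of each $\Om_{ij}$ on the round sphere of radius $r$, which makes $[\nabb,\Om_{ij}]$ a zero-order operator with coefficients of size $\les 1$. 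Applied to the $1$-form $\nabb Z^k\phi$ (with $\Om_{ij}$ acting via Lie derivative and then commuted past $\nabb$ at the cost of a bounded correction), this yields
\[
|\lap Z^k\phi|\;\les\; |\nabb^2 Z^k\phi|\;\les\; \frac{1}{r}\bigl(|\nabb Z^{k+1}\phi| + |\nabb Z^k\phi|\bigr),
\]
which only involves $k+1\leq 4$ derivatives. Using $r^{-5-\ep}\leq r^{-1-\ep}$ on $\{r\geq 1\}$, we then get $\int r^{-3-\ep}|\lap Z^k\phi|^2\;\les\; I^\ep[Z^{k+1}\phi]_{\tau_1}^{\tau_2}+I^\ep[Z^k\phi]_{\tau_1}^{\tau_2}\;\les\; A(\tau_1)$ by Proposition \ref{prop2}, valid for $k\leq 3$. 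Summing the three contributions completes the proof.
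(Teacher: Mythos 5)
Your approach is the same as the paper's: commute the equation with $Z^k$, use the identity $\Box=-\pa_u\pa_v+\frac{2}{r}\pa_r+\text{(angular Laplacian)}$ to trade $\pa_u\pa_v Z^k\phi$ for radial, angular, and source terms, bound the angular term by one extra angular derivative (forcing $k\leq 3$), and then invoke $I^\ep$ and $D^\ep$ from Proposition~\ref{prop2}. One caution on the bookkeeping: having written $\frac{1}{r^2}\lap$ in the expansion of $\Box$, your $\lap$ is the \emph{unit}-sphere Laplacian, whereas the $\nabb$ in your identity $r^2|\nabb g|^2=\sum|\Om_{ij}g|^2$ is the covariant derivative on the sphere of \emph{radius} $r$; with those conventions the correct comparison is $|\lap Z^k\phi|\les r^2|\nabb^2 Z^k\phi|\les r\bigl(|\nabb Z^{k+1}\phi|+|\nabb Z^k\phi|\bigr)$, not $\frac{1}{r}(\cdots)$, so $\int r^{-3-\ep}|\lap Z^k\phi|^2\les\int r^{-1-\ep}\bigl(|\nabb Z^{k+1}\phi|^2+|\nabb Z^k\phi|^2\bigr)$ \emph{exactly}, without the $r^{-5-\ep}\leq r^{-1-\ep}$ slack you invoke. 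The two $r^2$ mismatches cancel and the conclusion $\les I^\ep[\Om Z^k\phi]+I^\ep[Z^k\phi]\les A(\tau_1)$ is still reached, so the proof stands.
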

\begin{proof}
 Using the equation for $Z^k\phi$ (commutation of the equation \eqref{largeeq} with $Z^k$) we have
\begin{align*}
 \int_{\tau_1}^{\tau_2}\int_{\Si_\tau\cap \{r\geq 1\}}r^{1-\ep} |\pa_u\pa_v Z^k\phi|^2 dxd\tau &\les \int_{\tau_1}^{\tau_2}\int_{\Si_\tau\cap \{r\geq 1\}}r^{1-\ep} (r^{-1}|\pa Z^k\phi|
+|\lap Z^k\phi|+|Z^k F|)^2 dxd\tau\\
& \les I^{\ep}[Z^k\phi]_{\tau_1}^{\tau_2}+I^\ep[\Om Z^k\phi]_{\tau_1}^{\tau_2}+D^\ep[ Z^kF ]_{\tau_1}^{\tau_2}\les A(\tau_1)
\end{align*}
for all $k\leq 3$.
\end{proof}
Next, we improve the bootstrap assumption \eqref{bsFlarge}. We mainly consider the quadratic nonlinearity $Q(\phi, \phi)$, which satisfies the null condition.
We first estimate $D^\a_+[F]_{\tau_1}^{\tau_2}$. On $S_{\tau}$, we can estimate
\begin{align*}
 \int_{\om}|r Z^k\phi|^2(\tau, r, \om)d\om&\leq \int_{\om} |r Z^k\phi(\tau, R, \om)|^2d\om+\int_{S_{\tau}}r^{1+\a}|\pa_v (r Z^k\phi)|^2dvd\om \cdot \a^{-1}R^{-\a}\\
&\les \int_{\om} |r Z^k\phi(\tau, R, \om)|^2d\om+ R^{-\b}R^{-\a}\les R^{-1+\a}.
\end{align*}
Let $\bar C_{\tau_1, \tau_2, v_1}$ be the incoming null hypersurface between $\Si_{\tau_1}$ and $\Si_{\tau_2}$, defined as follows:
\[
 \bar C_{\tau_1, \tau_2, v_1}:=\{v=v_1, \quad u_{\tau_1}\leq u\leq u_{\tau_2}\}.
\]
The energy estimate on the region $\{v\geq v_1, \quad u_{\tau_1}\leq u\leq u_{\tau_2}\}$ then implies that
\[
\int_{C_{\tau_1,\tau_2, v_1}}|\overline{\pa_u}Z^k\psi|^2dud\om \les A(\tau_1),\quad k\leq 4.
\]
For the detailed proof of this estimate, we refer to e.g. Lemma 8 in \cite{yang1} or Lemma 11 in \cite{yang2}.
Then from estimate \eqref{nullformest}, we can show that
\begin{align*}
 D_+^\a[Z^k Q]_{\tau_1}^{\tau_2}&\les  R^{-1+\a}\int_{\tau_1}^{\tau_2}\int_{S_\tau}|\bar\pa\psi_1|^2 r^{-3+\a} drdtd\om+\sum\limits_{k_1\leq 2}
R^{-\b}\int_{\tau_1}^{\tau_2}\sup\limits_{v}r^{-2}\int_{\om}|\pa_u\psi_1|^2d\om d\tau\\
&\quad+A(\tau_1)\sum\limits_{k_2\leq 2}
\int_{v_{\tau_1}}^{\infty}\sup\limits_{u} r^{\a-1}\int_{\om}|\pa_v\psi_{2}|^2d\om dv+\int_{\tau_1}^{\tau_2}\int_{S_\tau}|\nabb Z^4\psi|^2 r^{\a} E[Z^3\phi](\tau)drd\om d\tau.
\end{align*}
Here we still use the notation that $\phi_1=Z^{k_1}\phi$, $\phi_2=Z^{k_2}\phi$, $\psi_1=r\phi_1$.
Now on $S_\tau$, $\tau_1\leq \tau\leq \tau_2$, we can estimate
\begin{align*}
 r^{-\a}\int_{\om}(\pa_u\psi_{1})^2d\om &\les\left.r^{-\a}\int_{\om}(\pa_u\psi_{1})^2d\om\right|_{v=v_{\tau_2}}+\int_{S_\tau}r^{-1-\a}|\pa_u\psi_{1}|^2dvd\om\\
&\quad\quad + \int_{S_\tau}r^{-1-\a}(\pa_u\psi_{1})^2dvd\om + \int_{S_\tau}r^{1-\a}(\pa_v\pa_u\psi_{1})^2dvd\om\\
&\les\left.\int_{\om}(\pa_u\psi_{1})^2d\om\right|_{v=v_{\tau_2}}+ \int_{S_\tau}r^{-1-\ep}(|\pa\psi_{1}|^2+|\pa\Om \psi_1|^2)dvd\om+\int_{S_\tau}r^{3-\a}|Z^{k_1}F|^2dvd\om.
\end{align*}
Similarly, on $\bar C_{\tau_1, \tau_2, v}$, we have
\begin{align*}
r^{\a}\int_{\om}(\pa_v\psi_{2})^2d\om &\les\left.r^{\a}\int_{\om}(\pa_v\psi_{2})^2d\om\right|_{u=u_{\tau_1}} + \int_{\bar C_{\tau_1,\tau_2, v}}r^{\a}(\pa_v\psi_{2})^2dud\om \\
&\quad\quad+ \int_{\bar C_{\tau_1, \tau_2, v}}r^{\a}(\pa_u\pa_v\psi_{2})^2dud\om + \int_{\bar C_{\tau_1, \tau_2, v}}r^{\a-1}(\pa_v\psi_{2})^2dud\om\\
&\les \left.r^{\a}\int_{\om}(\pa_v\psi_{2})^2d\om\right|_{u=u_{\tau_1}} + \int_{\bar C_{\tau_1, \tau_2, v}}r^{\a}(\pa_v\psi_{2})^2+r^{\a}(\lap\psi_{2})^2+
r^{\a+2}|Z^{k_2}F|^2dud\om.
\end{align*}
Therefore we can show that
\begin{align*}
D_+^\a[Z^k Q]_{\tau_1}^{\tau_2}\les R^{-3+2\a+\ep}A(\tau_1)+ R^{-\b-2+\a}A(\tau_1)+A(\tau_1)R^{-1-\b}+A(\tau_1)R^{-\b}\les A(\tau_1)R^{-\b}.
\end{align*}
The estimate for cubic or higher order nonlinearities is better and we can conclude that
\begin{equation}
\label{Douta}
 D_+^\a[Z^k F]_{\tau_1}^{\tau_2}\les A(\tau_1)R^{-\b},\quad k\leq 4.
\end{equation}
Next we estimate the integral inside the cylinder with radius $R$. We have
\begin{align*}
 \int_{\tau_1}^{\tau_2}\int_{r\leq R}(1+r)^{1+\ep}|Z^k F|^2 dxd\tau&\les \int_{\tau_1}^{\tau_2}\int_{r\leq R}(1+r)^{1+\ep}|\pa\phi_1|^2|\pa\phi_2|^2 dxd\tau\\
&\les\int_{\tau_1}^{\tau_2}\int_{r\leq 1}|\pa\phi_1|^2|\pa\phi_2|^2 dxd\tau+\int_{\tau_1}^{\tau_2}\int_{1\leq r\leq R}r^{1+\ep}|\pa\phi_1|^2|\pa\phi_2|^2 dxd\tau.
\end{align*}
Here we omitted the summation sigh for simplicity and the right hand side should be interpreted as the sum for all
$k_1+k_2\leq k\leq 4$. The integral on the cylinder with radius $1$ can be estimated by using elliptic estimates, which relies on the
 commutator $\pa_t$. To estimate the second part, we claim that
\begin{equation}
\label{claim}
 \int_{\om}r|\pa (Z^k\phi)|^2d\om\les  A(\tau),\quad k\leq 2,\quad 1\leq r\leq R.
\end{equation}
In fact from Lemma \ref{uvwestlar}, we have
\begin{align*}
 &\int_{1\leq r\leq R}r^{1-\ep}|\pa_u\pa_v Z^k\phi|^2 dx\les A(\tau),\quad k\leq 2,\\
&\int_{1\leq r\leq R}|\pa_u\pa_t Z^k\phi|^2 dx\les E[\pa_t Z^k\phi](\tau)\les A(\tau),\quad k\leq 3.
\end{align*}
This implies that
\[
 \int_{1\leq r\leq R}|\pa_u\pa_r Z^k\phi|^2 dx\les A(\tau),\quad k\leq 2.
\]
In particular, we can show that
\[
 r\int_{\om}|\pa_u Z^k\phi|^2d\om \leq A(\tau),\quad 1\leq r\leq R,\quad k\leq 2.
\]
This leads to the above claim \eqref{claim}. Hence, we can show that
\begin{align*}
 \int_{\tau_1}^{\tau_2}\int_{1\leq r\leq R}r^{1+\ep}|\pa\phi_1|^2|\pa\phi_2|^2 dxd\tau\les R^{\ep}\int_{\tau_1}^{\tau_2}A(\tau)^2d\tau\les A(\tau_1)R^{\ep-\b}.
\end{align*}
Inside the cylinder with radius $1$, by using elliptic theory, we can show that
\[
 |\pa Z^k\phi|^2\les A(\tau),\quad k\leq 2.
\]
For the details, we refer to e.g. the end of the second last section of \cite{yang3}. Therefore, we can estimate
\[
 \int_{\tau_1}^{\tau_2}\int_{r\leq1}|\pa\phi_1|^2|\pa\phi_2|^2 dxd\tau\les \int_{\tau_1}^{\tau_2}A(\tau)^2d\tau\les A(\tau_1)R^{-\b+\ep}.
\]
Combined with the estimate \eqref{Douta}, we then have shown that
\begin{align*}
 D^{\ep}[Z^k F]_{\tau_1}^{\tau_2}\leq D^\a_+[Z^k F]_{\tau_1}^{\tau_2}+ \int_{\tau_1}^{\tau_2}\int_{r\leq R}(1+r)^{1+\ep}|Z^k F|^2 dxd\tau
\les A(\tau_1)R^{\ep-\b},\quad \forall k\leq 4.
\end{align*}
Simply considering the total decay in $R$ and $(\tau_1)_+$, we see from the definition of $A(\tau)$ in Proposition
\ref{prop2} that
\[
 A(\tau)\leq 2 \tau_+^{-1-\a}R^{\ep+\a-\b}.
\]
Therefore  we have the estimate for $ D^\a_+[Z^k F]_{\tau_1}^{\tau_2}$
\[
  D^\a_+[Z^k F]_{\tau_1}^{\tau_2}\les (\tau_1)_+^{-1-\a}R^{-\b}R^{\ep+\a-\b},\quad \forall k\leq 4.
\]
For $D^{\ep}[Z^k F]_{\tau_1}^{\tau_2}$, when $(\tau_1)_+\leq R$, we have
\[
A(\tau_1)\leq R^{-2+\a}\leq R^\ep\min\{R^{-\b}(\tau_1)_+^{-1-\a}, R^{-2+\a}, R^{-1-\b-\ep}(\tau_1)_+^{-\a}\}.
\]
Here recall that $\b=1-2\a$, $\ep=\frac{\a}{20}$. When $(\tau_1)_+\geq R$, we can show that
\begin{align*}
 A(\tau_1)&\leq R^{-\b}(\tau_1)_+^{-1-\a}+R^{1+\ep-\b}(\tau_1)_+^{-2}\leq 2R^{1+\ep-\b}(\tau_1)_+^{-2}\\
&\leq 2R^{2\ep+\a} \min\{R^{-\b}(\tau_1)_+^{-1-\a}, R^{-2+\a}, R^{-1-\b-\ep}(\tau_1)_+^{-\a}\}
\end{align*}
In any case, we have
\[
 A(\tau_1)\leq 2R^{2\ep+\a} \min\{R^{-\b}(\tau_1)_+^{-1-\a}, R^{-2+\a}, R^{-1-\b-\ep}(\tau_1)_+^{-\a}\}
\]
Therefore we have
\[
 D^{\ep}[Z^k F]_{\tau_1}^{\tau_2}\les A(\tau_1)R^{\ep-\b}\les R^{3\ep+\a-\b}\min\{R^{-\b}(\tau_1)_+^{-1-\a}, R^{-2+\a},
R^{-1-\b-\ep}(\tau_1)_+^{-\a}\}
\]
for all $k\leq 4$. Recall that $\ep=\frac{\a}{20}$, $\b=1-2\a$ and $\a<\frac{1}{4}$. We conclude that for sufficiently large $R$,
depending only on $\a$, we can improve the bootstrap assumption \eqref{bsFlarge}. Then the construction
of the solution on the region $\{r\leq t+R\}$ will be the same as that in e.g. \cite{yang1} (the last section). Hence we can conclude
our main Theorem.

\bibliography{shiwu}{}
\bibliographystyle{plain}

\bigskip

DPMMS, Centre for Mathematical Sciences, University of Cambridge,
Wilberforce Road, Cambridge, UK CB3 0WA

\textsl{Email address}: S.Yang@damtp.cam.ac.uk
 \end{document}